\newtheorem{thm}{Theorem}[section]
\newtheorem{cor}[thm]{Corollary}
\newtheorem{lem}[thm]{Lemma}
\theoremstyle{definition}
\numberwithin{equation}{section}
\begin{document}


\baselineskip=17pt



\title[On uniform approximation to real numbers]{On uniform approximation to real numbers}

\author[Y. Bugeaud]{Yann Bugeaud}
\address{IRMA, U.M.R. 7501, Universit\'e de Strasbourg et CNRS, 
7 rue Ren\'e Descartes,  67084 Strasbourg, France}
\email{bugeaud@math.unistra.fr}

\author[J. Schleischitz]{Johannes Schleischitz}
\address{Institute of Mathematics, Department of Integrative Biology, BOKU Wien, 1180, Vienna, Austria.}
\email{johannes.schleischitz@boku.ac.at}

\date{}

\begin{abstract}
Let $n \ge 2$ be an integer and $\xi$ a transcendental real number. 
We establish several new relations between the values 
at $\xi$ of the exponents of Diophantine approximation 
$w_n, w_{n}^{\ast}, \widehat{w}_{n}$, and $\widehat{w}_{n}^{\ast}$. 
Combining our results with recent estimates by Schmidt and Summerer
allows us to refine the inequality $\widehat{w}_{n}(\xi) \le 2n-1$ 
proved by Davenport and Schmidt in 1969. 
\end{abstract}

\subjclass[2010]{Primary 11J04; Secondary 11J13, 11J82}     

\keywords{exponents of Diophantine approximation, Mahler classification, extremal numbers} 

\maketitle

\section{Introduction} \label{sektion1}

Throughout the present paper, the height $H(P)$ 
of a complex polynomial $P(X)$ is the maximum of the moduli of its
coefficients and the height $H(\alpha)$ of an algebraic number
$\alpha$ is the height of its minimal polynomial over $\mathbb{Z}$. 
For an integer $n \ge 1$, the exponents of Diophantine approximation 
$w_n, w_{n}^{\ast}, \widehat{w}_{n}$, and $\widehat{w}_{n}^{\ast}$ 
measure the quality of approximation to real numbers by algebraic
numbers of degree at most $n$. 
They are defined as follows. 

Let $\xi$ be a real number.
We denote by $w_{n}(\xi)$ the supremum of the real numbers $w$ for which
$$
0<\vert P(\xi)\vert\leq H(P)^{-w}
$$
has infinitely many solutions in polynomials $P$ in ${\mathbb{Z}[X]}$ of degree at most $n$,  
and by $\widehat{w}_{n}(\xi)$ the supremum of the real numbers $w$ for which the system
\[
0<\vert P(\xi)\vert\leq H^{-w}, \qquad H(P)\leq H,
\]
has a solution $P$ in ${\mathbb{Z}[X]}$ of degree at most $n$, for all large values of $H$.

Likewise, we denote by $w_{n}^{\ast}(\xi)$ 
the supremum of the real numbers $w$ for which
$$
0<\vert \xi-\alpha\vert \leq H(\alpha)^{-w-1}
$$
has infinitely many solutions in algebraic numbers $\alpha$ of degree at most $n$, and by 
$\widehat{w}_{n}^{\ast}(\xi)$ the supremum of the real numbers $w$ for which the system
$$
0<\vert \xi-\alpha\vert \leq H(\alpha)^{-1} H^{-w}, \qquad H(\alpha)\leq H,
$$
is satisfied by an algebraic number $\alpha$ of degree at most $n$, for all large values of $H$. 

It is easy to check that every real number $\xi$ satisfies     
$$
w_{1}(\xi)=w_{1}^{\ast}(\xi) 
\quad \hbox{and} \quad 
\widehat{w}_{1}(\xi)=\widehat{w}_{1}^{\ast}(\xi). 
$$
Furthermore, if $n$ is a positive integer and $\xi$ a real number which is not algebraic  
of degree at most $n$, then Dirichlet's Theorem implies that
\begin{equation} \label{eq:derdirichlet}
w_{n}(\xi)\geq \widehat{w}_{n}(\xi)\geq n. 
\end{equation}
By combining \eqref{eq:derdirichlet} with the Schmidt Subspace Theorem, 
we can deduce that, for all positive integers $d, n$,
every real algebraic number $\xi$ of degree $d$ satisfies
\[
w_{n}(\xi)=\widehat{w}_{n}(\xi)=w_{n}^{\ast}(\xi)=\widehat{w}_{n}^{\ast}(\xi)=\min\{n,d-1\},
\]
see~\cite[Theorem~2.4]{buglaur}. Thus, we may restrict our attention 
to transcendental real numbers and, in the sequel, $\xi$ will always denote a transcendental
real number. Furthermore, in the sense of Lebesgue measure, almost 
all real numbers $\xi$ satisfy
\[
w_{n}(\xi)=\widehat{w}_{n}(\xi)=w_{n}^{\ast}(\xi)=\widehat{w}_{n}^{\ast}(\xi)=n, \quad
\hbox{for $n \ge 1$}. 
\]
The survey~\cite{bdraft} gathers the known results on the exponents 
$w_{n}^{\ast},\widehat{w}_{n}^{\ast},w_{n},\widehat{w}_{n}$, along with some 
open questions; see also \cite{bugbuch,wald}. 

A central open problem, often referred to 
as the Wirsing conjecture \cite{wirsing,bugbuch}, 
asks whether every transcendental real number $\xi$ satisfies 
$w_{n}^{\ast}(\xi)\geq n$ for every integer $n \ge 2$. It has been 
solved by Davenport and Schmidt \cite{davsh67} for $n=2$ (see also \cite{Mosh14}), but 
remains wide open for $n \ge 3$. 
In this direction, Bernik and Tishchenko~\cite{bertish} established that
\begin{equation} \label{eq:bertis}
w_{n}^{\ast} (\xi) \ge \frac{n+\sqrt{n^{2}+16n-8}}{4}
\end{equation}
holds for every integer $n \ge 3$ and every transcendental real number $\xi$. 
The lower bound \eqref{eq:bertis} was subsequently slightly refined   
by Tsishchanka~\cite{Tsi07}; 
see \cite{bugbuch} for additional references. 

Among the known relations between the exponents 
$w_{n}^{\ast},\widehat{w}_{n}^{\ast},w_{n},\widehat{w}_{n}$, let us mention that
Schmidt and Summerer~\cite[(15.4')]{ssch} used their deep, new theory 
of parametric geometry of numbers to establish that 
\begin{equation} \label{eq:ssmj}
w_{n}(\xi)\geq (n-1)
\frac{\widehat{w}_{n}(\xi)^{2}-\widehat{w}_{n}(\xi)}{1+(n-2)\widehat{w}_{n}(\xi)}
\end{equation}
holds for $n\geq 2$ and every transcendental real number $\xi$. 
This extends an earlier result of Jarn\'\i k \cite{Jar54} which 
deals with the case $n=2$.
For $n=3$ Schmidt and Summerer~\cite{sums} established the better bound 
\begin{equation} \label{eq:beesser}
w_{3}(\xi)\geq \frac{\widehat{w}_{3}(\xi)\cdot (\sqrt{4 \widehat{w}_{3}(\xi)-3}-1)}{2}.
\end{equation}

In 1969, Davenport and Schmidt \cite{davsh} proved that 
every transcendental real number $\xi$ satisfies
\begin{equation} \label{eq:glmschr}
1\leq \widehat{w}_{n}^{\ast}(\xi)\leq \widehat{w}_{n}(\xi)\leq 2n-1,  
\end{equation}
for every integer $n \ge 1$ (the case $n=1$ is due to Khintchine \cite{Kh26}). 
The stronger inequality
\begin{equation} \label{eq:glmschr2}
\widehat{w}_{2}(\xi) \leq \frac{3 + \sqrt{5}}{2} 
\end{equation}
was proved by Arbour and Roy~\cite{arbroy}; it can also be obtained by a direct
combination of another result of \cite{davsh} with a transference theorem of 
Jarn\'\i k \cite{jarnik}, which remained forgotten until 2004.  
The first inequality in \eqref{eq:glmschr} is sharp for every $n \ge 1$; 
see~\cite[Proposition~2.1]{buglaur}. 
Inequality \eqref{eq:glmschr2} is also sharp: Roy \cite{royfr,daroy} 
proved the existence of transcendental 
real numbers $\xi$ for which $\widehat{w}_{2}(\xi) = \frac{3 + \sqrt{5}}{2}$ 
and called them {\it extremal numbers}. We also point out the relations
\begin{equation} \label{eq:sternhut}
w_{n}^{\ast}(\xi)\leq w_{n}(\xi)\leq w_{n}^{\ast}(\xi)+n-1, \quad 
\widehat{w}_{n}^{\ast}(\xi)\leq \widehat{w}_{n}(\xi)\leq \widehat{w}_{n}^{\ast}(\xi)+n-1,
\end{equation}
valid for every integer $n\geq 1$ and every transcendental  
real number $\xi$, see~\cite[Lemma~A.8]{bugbuch} 
or \cite[Theorem 2.3.1]{bdraft}.  

In view of the lower bound
\begin{equation} \label{eq:wirrwarr}
w_{n}^{\ast}(\xi)\geq \frac{\widehat{w}_{n}(\xi)}{\widehat{w}_{n}(\xi)-n+1},
\end{equation}
established in~\cite{buglaur} and valid for every integer $n \ge 2$
and every real transcendental number $\xi$, 
any counterexample $\xi$ to the Wirsing conjecture 
must satisfy $\widehat{w}_{n}(\xi) > n$ for some
integer $n \ge 3$. It is unclear whether transcendental real numbers
with the latter property do exist. The main purpose of the present paper is to obtain
new upper bounds for $\widehat{w}_{n}(\xi)$ and, in particular, to improve the last
inequality of \eqref{eq:glmschr} for every integer $n \ge 3$.

\section{Main results} 

Our main result is the following improvement of the upper bound
\eqref{eq:glmschr} of Davenport and Schmidt \cite{davsh}.

\begin{thm} \label{giltgleichheit}
Let $n\geq 2$ be an integer and $\xi$ a real transcendental number. Then 
\begin{equation} \label{eq:tomcat1}
\widehat{w}_{n}(\xi)\leq   n - \frac{1}{2} + \sqrt{n^{2}-2n+ \frac{5}{4}}. 
\end{equation}
For $n=3$ we have the stronger estimate
\begin{equation} \label{eq:tomcat2}
\widehat{w}_{3} (\xi)\leq 3+\sqrt{2} = 4.4142 \ldots 
\end{equation}
\end{thm}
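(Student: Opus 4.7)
The plan is to combine the Schmidt--Summerer lower bound \eqref{eq:ssmj} on $w_n(\xi)$ in terms of $\widehat{w}_n(\xi)$ with a complementary upper bound on $w_n(\xi)$ that I would need to establish first. Observing that the target inequality \eqref{eq:tomcat1} is equivalent to the quadratic inequality
\[
\widehat{w}_n(\xi)^2 - (2n-1)\,\widehat{w}_n(\xi) + (n-1) \le 0,
\]
i.e. to $\widehat{w}_n(\xi)(\widehat{w}_n(\xi)-1) \le (n-1)\bigl(2\widehat{w}_n(\xi)-1\bigr)$, feeding \eqref{eq:ssmj} into the left-hand side reduces the theorem to proving a bound of the shape
\[
w_n(\xi)\,\bigl(1+(n-2)\widehat{w}_n(\xi)\bigr) \;\le\; (n-1)^2\,\bigl(2\widehat{w}_n(\xi)-1\bigr).
\]
Hence the crux is a new upper bound on $w_n(\xi)$ that grows only linearly in $\widehat{w}_n(\xi)$. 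For $n=2$ this specializes to $w_2(\xi)\le 2\widehat{w}_2(\xi)-1$, which is tight on Roy's extremal numbers (where $w_2=2+\sqrt 5$ and $2\widehat{w}_2-1=2+\sqrt 5$), a good sanity check that the target is of the correct shape.

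To produce such an upper bound, I would work with the sequence $(P_k)_{k\ge 1}$ of best approximation integer polynomials of degree at most $n$, with heights $H_k=H(P_k)\to\infty$, and translate the definitions of $w_n(\xi)$ and $\widehat{w}_n(\xi)$ into quantitative statements about $|P_k(\xi)|$ and about the height gaps $H_{k+1}/H_k$. The standard mechanism is to consider two (or several) consecutive such polynomials: since $P_k$ and $P_{k+1}$ are linearly independent and both very small at $\xi$, their resultant $\operatorname{Res}(P_k,P_{k+1})$ is a nonzero integer that can be estimated above in terms of $|P_k(\xi)|$, $|P_{k+1}(\xi)|$, $H_k$ and $H_{k+1}$. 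The resulting lower bound on $H_{k+1}$ in terms of $H_k$, telescoped along the sequence, yields the required upper estimate on $w_n(\xi)$ in terms of $\widehat{w}_n(\xi)$. To sharpen the constants I would also bring in the starred exponents via the relations \eqref{eq:sternhut} and the inequality \eqref{eq:wirrwarr}, using the latter both as a link between $w_n^\ast$ and $\widehat{w}_n$ and to rule out certain degenerate configurations among consecutive best approximations.

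For the improved bound \eqref{eq:tomcat2} at $n=3$, the same combining step is performed with the stronger Schmidt--Summerer inequality \eqref{eq:beesser} in place of \eqref{eq:ssmj}: substituting $w_3(\xi)\ge \widehat{w}_3(\xi)\bigl(\sqrt{4\widehat{w}_3(\xi)-3}-1\bigr)/2$ into the new upper bound on $w_3(\xi)$, isolating the square root, and squaring leads to a polynomial inequality whose relevant root is $3+\sqrt 2$. The main obstacle is undoubtedly the first step: proving the new linear-type upper bound on $w_n(\xi)$ with the right coefficients. Crude resultant arguments typically lose factors depending on the degree, so obtaining the sharp constant $(n-1)^2$ presumably requires working with more than two consecutive best approximations, handling separately the ``very short gap'' and ``very long gap'' regimes between successive heights $H_k$, and combining these with the transference-type information encoded in \eqref{eq:wirrwarr}.
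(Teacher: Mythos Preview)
There is a genuine gap: the ``complementary upper bound on $w_n(\xi)$'' you set out to prove,
\[
w_n(\xi)\bigl(1+(n-2)\widehat{w}_n(\xi)\bigr)\le (n-1)^2\bigl(2\widehat{w}_n(\xi)-1\bigr),
\]
is false in general. When $\widehat{w}_n(\xi)=n$ it would force $w_n(\xi)\le 2n-1$, but there exist transcendental $\xi$ with $\widehat{w}_n(\xi)=n$ and $w_n(\xi)$ arbitrarily large---any Liouville number will do (see Corollary~\ref{mahlerklasse}). More broadly, no upper bound on $w_n(\xi)$ in terms of $\widehat{w}_n(\xi)$ alone can exist, so the resultant/telescoping programme you describe is aimed at a false target. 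Your sanity check on Roy's extremal numbers only witnesses equality at one specific point of the $(w_2,\widehat{w}_2)$-spectrum; it says nothing about the inequality elsewhere.

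The paper's argument runs in the opposite direction. It establishes a second \emph{upper bound on $\widehat{w}_n(\xi)$} which is a \emph{decreasing} function of $w_n(\xi)$, namely
\[
\widehat{w}_n(\xi)\le \frac{n\,w_n(\xi)}{w_n(\xi)-n+1},
\]
via a resultant estimate (Lemma~\ref{davschm}) applied to an irreducible $P$ realising $w_n$ together with a $Q$ coming from the definition of $\widehat{w}_n$ (Theorem~\ref{zendent} with $m=n$). This bound is only available under the side condition $w_{n-1}(\xi)<w_n(\xi)$; in the complementary case $w_{n-1}(\xi)=w_n(\xi)$ a separate argument (Theorem~\ref{zeitung} with $m=n-1$) gives $\widehat{w}_n(\xi)\le 2n-2$ outright. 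In the first case one now has two upper bounds for $\widehat{w}_n(\xi)$, one increasing and one decreasing in $w_n(\xi)$; maximising the minimum occurs at their crossing point and yields \eqref{eq:tomcat1}. The case $n=3$ is identical with \eqref{eq:beesser} replacing \eqref{eq:ssmj}. The case split and the orientation of the new inequality are the missing ingredients in your plan.
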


For $n=2$, Theorem \ref{giltgleichheit} provides an alternative proof 
of \eqref{eq:glmschr2}. This inequality  
is best possible, as already mentioned in the Introduction. 

For $n \ge 3$, Theorem \ref{giltgleichheit} gives the first improvement on \eqref{eq:glmschr}. 
This is, admittedly, a small improvement, since
for $n\ge 4$ the right hand side of \eqref{eq:tomcat1} can be 
written $2n- \frac{3}{2}+ \varepsilon_n$, 
where $\varepsilon_n$ is positive and $\lim_{n \to + \infty} \varepsilon_n = 0$.  
There is no reason to believe that our bound is best 
possible for $n \ge 3$. 

Theorem \ref{giltgleichheit} follows from the next two statements 
combined with the lower bounds \eqref{eq:ssmj} and \eqref{eq:beesser}
of $w_n(\xi)$ in terms of 
$\widehat{w}_{n}(\xi)$ obtained by Schmidt and Summerer~\cite{ssch,sums}. 

\begin{thm} \label{zendent}
Let $m \ge n \ge 2$ be integers and $\xi$ a transcendental real number.
Then {\upshape(}at least{\upshape)} one of the two assertions 
\begin{equation} \label{eq:synode}
w_{n-1}(\xi)=w_{n}(\xi)=w_{n+1}(\xi)=\cdots=w_{m}(\xi), 
\end{equation}
or
\begin{equation} \label{eq:denngiltsea} 
\widehat{w}_{n}(\xi)\leq m+(n-1)\frac{\widehat{w}_{n}(\xi)}{w_{m}(\xi)}, 
\end{equation}
holds. In other words, the inequality $w_{n-1}(\xi)<w_{m}(\xi)$ implies \eqref{eq:denngiltsea}.
\end{thm}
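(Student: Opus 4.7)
The plan is to prove the contrapositive: assuming $w_{n-1}(\xi)<w_m(\xi)$, I will derive \eqref{eq:denngiltsea}. The engine is a standard coprime-resultant estimate applied to a polynomial of degree at most $n$ supplied by $\widehat{w}_n(\xi)$ and an irreducible polynomial of degree between $n$ and $m$ supplied by $w_m(\xi)$. To extract the latter, I fix $w$ strictly between $w_{n-1}(\xi)$ and $w_m(\xi)$. The definition of $w_m(\xi)$ produces infinitely many $Q\in\mathbb{Z}[X]$ with $\deg Q\le m$, $H(Q)\to\infty$, and $|Q(\xi)|\le H(Q)^{-w}$. Factoring each $Q=\prod_j Q_j$ over $\mathbb{Z}[X]$ and combining $|Q(\xi)|=\prod_j|Q_j(\xi)|$ with Gelfond's multiplicative height estimate $\prod_j H(Q_j)\asymp H(Q)$, a pigeonhole argument extracts an irreducible factor $R$ of $Q$ satisfying $|R(\xi)|\le H(R)^{-w}$. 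Iterating this extraction on the complementary factor whenever a single irreducible divisor of bounded height keeps reappearing, and using the gap $w>w_{n-1}(\xi)$ to rule out infinitely many irreducible factors of degree at most $n-1$, I obtain---after passing to a subsequence of fixed degree $d$ with $n\le d\le m$---infinitely many pairwise distinct irreducible $R\in\mathbb{Z}[X]$ with $|R(\xi)|\le H(R)^{-w}$ and $H(R)\to\infty$.

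For each such $R$ I set $H:=H(R)^s$ with $s>0$ a parameter to be optimised, and invoke the definition of $\widehat{w}_n(\xi)$ at parameter $H$ to obtain $P\in\mathbb{Z}[X]$ with $\deg P\le n$, $H(P)\le H$, and $|P(\xi)|\le H^{-\widehat{w}_n(\xi)+\varepsilon}$. Since $R$ is irreducible of degree $d\ge n$, the pair $(P,R)$ is coprime in $\mathbb{Z}[X]$ (apart from the negligible exceptional case $d=n$ and $P$ proportional to $R$, which is discarded). The Bezout identity coming from the Sylvester matrix, together with the standard Cramer-rule bounds $H(A)\ll H(P)^{d-1}H(R)^{n}$ and $H(B)\ll H(P)^{d}H(R)^{n-1}$ on the cofactors in $AP+BR=\mathrm{Res}(P,R)$, yields after evaluation at $\xi$
$$
1\le|\mathrm{Res}(P,R)|\ll H(P)^{d-1}H(R)^{n}|P(\xi)|+H(P)^{d}H(R)^{n-1}|R(\xi)|.
$$
Substituting the bounds on $H(P)$, $|P(\xi)|$, and $|R(\xi)|$ rewrites the right-hand side as a sum of two powers of $H(R)$; a short check shows both exponents can be made strictly negative by a suitable choice of $s$ precisely when $nd<(\widehat{w}_n(\xi)-d+1)(w-n+1)$, which would contradict $H(R)\to\infty$. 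Letting $\varepsilon\to0$ and $w\to w_m(\xi)$, the absence of contradiction forces $\widehat{w}_n(\xi)\le(d-1)+nd/(w_m(\xi)-n+1)$, and then $d\le m$ gives $\widehat{w}_n(\xi)\le(m-1)+nm/(w_m(\xi)-n+1)$. An elementary rearrangement shows that this implies \eqref{eq:denngiltsea} in the only non-trivial regime $w_m(\xi)\ge m+n-1$; when $w_m(\xi)<m+n-1$, inequality \eqref{eq:denngiltsea} is immediate from $\widehat{w}_n(\xi)\le w_n(\xi)\le w_m(\xi)$.

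The main obstacle I anticipate is the clean extraction in the first step of infinitely many \emph{distinct} irreducible $R$ of degree at least $n$. One must address the possibility that a single irreducible factor of bounded height recurs in infinitely many of the $Q$'s, which requires iterating the factorisation on the complementary factor and carefully exploiting the gap $w>w_{n-1}(\xi)$ at each step. The resultant step itself is entirely classical---the same tool already used by Davenport and Schmidt in their proof of $\widehat{w}_n(\xi)\le 2n-1$---and the concluding algebra is mechanical.
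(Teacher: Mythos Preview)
Your approach is essentially the paper's: extract via Wirsing's Hilfssatz an irreducible polynomial $R$ of degree $d\in[n,m]$ realising an exponent close to $w_m(\xi)$, pair it with a polynomial $P$ of degree at most $n$ supplied by $\widehat{w}_n(\xi)$ at parameter $H=H(R)^s$, and feed the coprime pair into the Davenport--Schmidt resultant estimate (Lemma~\ref{davschm}). The differences are purely technical. The paper fixes $s=\tau\approx w_m(\xi)/\widehat{w}_n(\xi)$ and compares the resulting upper and lower bounds on $\max\{|P(\xi)|,|R(\xi)|\}$, obtaining \eqref{eq:denngiltsea} directly; it pays for this with the auxiliary estimate \eqref{eq:infer} (which genuinely uses $m\ge n$). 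You instead optimise over $s$, land on the intermediate bound $\widehat{w}_n(\xi)\le d-1+nd/(w_m(\xi)-n+1)\le m-1+nm/(w_m(\xi)-n+1)$, and then reduce to \eqref{eq:denngiltsea} by the case split $w_m(\xi)\gtrless m+n-1$; this sidesteps \eqref{eq:infer}.

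There is, however, one point you gloss over. When $d=n$ you write that the case $P\propto R$ is ``discarded'', but you have no mechanism for discarding it: for a given $R$ and $H=H(R)^s$ with $s>1$, it can happen that every admissible $P$ is an integer multiple of $R$. The paper avoids this by working with the \emph{two-sided} bound $H(R)^{-w_m(\xi)-\epsilon}\le|R(\xi)|\le H(R)^{-w_m(\xi)+\epsilon}$ and choosing $s$ so that $|P(\xi)|<|R(\xi)|$ is forced, whence $P\neq cR$. In your one-sided setup the fix is different but easy: if $P=cR$ for infinitely many of your $R$, then $|R(\xi)|\le|P(\xi)|\le H(R)^{-s(\widehat{w}_n(\xi)-\epsilon)}$, so $w_n(\xi)\ge s\,\widehat{w}_n(\xi)$; taking $s$ near the upper end $(w-n+1)/n$ of your interval and then $w\to w_m(\xi)$ yields $\widehat{w}_n(\xi)\le n\,w_m(\xi)/(w_m(\xi)-n+1)$, which already implies \eqref{eq:denngiltsea}. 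You should insert this short argument rather than wave it away. (A second, harmless, point: as $w\uparrow w_m(\xi)$ the degree $d=d(w)$ may vary, so one more pigeonhole is needed to fix $d$ along a sequence $w_k\to w_m(\xi)$ before passing to the limit.)
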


We remark that $w_{m}(\xi)$ may be infinite in Theorem \ref{zendent}, and this is
also the case in Theorems \ref{zeitung} and~\ref{simulterne}. 
By \eqref{eq:glmschr}, 
the inequality \eqref{eq:denngiltsea} always holds for $m \ge 2n-1$, thus 
Theorem \ref{zendent} is of interest only for $n \le m \le 2n-2$. 

For our main result Theorem~\ref{giltgleichheit} we only need the case $m=n$ of Theorem~\ref{zendent}.
We believe that at least in this case the assumption $w_{n-1}(\xi)<w_{m}(\xi)$ 
for \eqref{eq:denngiltsea} can be removed. This is indeed the case if
$\widehat{w}_{n}(\xi) = \widehat{w}_{n}^{\ast}(\xi)$; see 
Theorem \ref{simulterne}. 

\begin{thm} \label{zeitung}
Let $m,n$ be positive integers and $\xi$ be a transcendental real number. Then 
\[
\min\{w_{m}(\xi),\widehat{w}_{n}(\xi)\}\leq m+n-1.
\]
\end{thm}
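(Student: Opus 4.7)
I would argue by contradiction. Suppose $w_m(\xi) > m+n-1$ and $\widehat{w}_n(\xi) > m+n-1$, and fix $\eta > 0$ so that both quantities exceed $m+n-1+\eta$. The central tool is a classical consequence of the non-vanishing of the resultant: there is a constant $c = c(\xi, m, n) > 0$ such that, for every pair of coprime polynomials $P, Q \in \mathbb{Z}[X]$ with $\deg P \le m$ and $\deg Q \le n$,
\[
H(P)^{n-1}\,H(Q)^{m}\,|P(\xi)| \;+\; H(P)^{n}\,H(Q)^{m-1}\,|Q(\xi)| \;\ge\; c.
\]
This follows from $|\mathrm{Res}(P,Q)| \ge 1$ combined with a standard upper bound on the resultant obtained by expanding $\mathrm{Res}(P,Q) = a_P^{\,n}\prod_i Q(\alpha_i)$ and estimating $|Q(\alpha_i)|$ near and away from $\xi$; see, for instance, the appendix of~\cite{bugbuch}.

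The plan is to manufacture, for arbitrarily large heights, pairs $(P, Q)$ that violate this inequality. From the $w_m$ hypothesis I extract a sequence $(P_k) \subset \mathbb{Z}[X]$ with $\deg P_k \le m$, $H(P_k) \to \infty$, and $|P_k(\xi)| \le H(P_k)^{-(m+n-1+\eta)}$. Passing to an irreducible factor carrying essentially the full exponent of approximation (using Gelfond's inequality $H(P_k) \asymp_m \prod_i H(P_{k,i})$ for the factorisation into irreducibles, and discarding any subsequence along which the chosen factor has bounded height, which is impossible by transcendence of $\xi$), I may assume each $P_k$ is irreducible, at the cost of replacing $\eta$ by a smaller positive quantity.

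Next I fix a rational $s$ in the open interval
\[
\left(\frac{n}{n+\eta},\ 1 + \frac{\eta}{m}\right),
\]
which is nonempty because $(n+\eta)(m+\eta) > mn$ for every $\eta > 0$. Setting $H_k := H(P_k)^{s}$, the $\widehat{w}_n$ hypothesis supplies $Q_k \in \mathbb{Z}[X]$ with $\deg Q_k \le n$, $H(Q_k) \le H_k$, and $|Q_k(\xi)| \le H_k^{-(m+n-1+\eta)}$. Assuming $P_k$ and $Q_k$ are coprime, substituting these bounds gives
\[
H(P_k)^{n-1}H(Q_k)^{m}|P_k(\xi)| \le H(P_k)^{m(s-1)-\eta}, \qquad H(P_k)^{n}H(Q_k)^{m-1}|Q_k(\xi)| \le H(P_k)^{n-s(n+\eta)},
\]
and both exponents are strictly negative by the choice of $s$. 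Letting $H(P_k) \to \infty$ drives both quantities to $0$, contradicting the lower bound~$c$.

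The main obstacle is the residual case in which $P_k$ and $Q_k$ share a nontrivial factor for all but finitely many~$k$. Since $P_k$ is irreducible, this forces $P_k \mid Q_k$, hence $m \le n$ and $Q_k = P_k R_k$ with $R_k \in \mathbb{Z}[X]$, $\deg R_k \le n-m$, and $H(R_k) \ll H(P_k)^{s-1}$. I would handle this through the identity $|R_k(\xi)| = |Q_k(\xi)|/|P_k(\xi)|$: either the height of $R_k$ stays bounded, in which case $R_k$ ranges over a finite set and, for $k$ large, one can exhibit a different polynomial $Q'_k$ satisfying the $\widehat{w}_n$ bounds and coprime to $P_k$; or $R_k$ itself produces an anomalously strong approximation to $\xi$ in degree $\le n-m$, which can be excluded by an induction on $n-m$. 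Making this dichotomy quantitative is where the delicate technical work of the proof lies.
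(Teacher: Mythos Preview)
Your approach is the same as the paper's: produce an irreducible $P$ realising $w_m$, use $\widehat{w}_n$ to find $Q$ of controlled height, and feed the coprime pair into the resultant bound (the paper's Lemma~\ref{davschm}). Your exponent bookkeeping is correct.

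The only divergence is your treatment of coprimality, which you flag as the ``main obstacle'' and propose to handle by a dichotomy and an induction on $n-m$. This is unnecessary, and as written that paragraph is not a proof: you do not explain how to produce a coprime $Q_k'$ in the bounded-$H(R_k)$ branch, and the induction is only gestured at. But your own estimate $H(R_k)\ll_n H(P_k)^{s-1}$ already disposes of the case $P_k\mid Q_k$ the moment you take $s<1$, which your interval $\bigl(\tfrac{n}{n+\eta},\,1+\tfrac{\eta}{m}\bigr)$ permits since $\tfrac{n}{n+\eta}<1$: then $H(R_k)\to 0$, contradicting $H(R_k)\ge 1$. The paper does exactly this, in the equivalent form $X:=K(n)H(P)/2$ for a constant $K(n)\in(0,1)$ such that (by Gelfond's inequality, \cite[Lemma~A.3]{bugbuch}) no integer multiple of $P$ of degree at most $n$ can have height below $K(n)H(P)$; coprimality is then automatic and the proof ends in three lines. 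So there is no ``delicate technical work'' at the end --- just pick $s<1$.
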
 

Taking $m=n$ in Theorem \ref{zeitung} gives \eqref{eq:glmschr}, but our proof
differs from that of Davenport and Schmidt. 
The choice $m=1$ in Theorem \ref{zeitung} yields the main claim of 
\cite[Theorem~5.1]{j3}, which asserts that every real number $\xi$
with $w_{1}(\xi)\geq n$ satisfies 
$\widehat{w}_{j}(\xi)=j$ for $1\leq j\leq n$. 
Theorem~\ref{zeitung} gives new information for $2 \le m \le n-1$.

A slight modification of the proof of Theorem \ref{zendent} gives the next result. 

\begin{thm} \label{simulterne}
Let $m, n$ be positive integers and $\xi$ a transcendental real number. 
Assume that either $m\geq n$ or
\begin{equation} \label{eq:negate}
w_{m}(\xi) > \min\{n+m-1,w_{n}^{\ast}(\xi)\} 
\end{equation}
is satisfied. Then
\begin{equation} \label{eq:value}
\widehat{w}_{n}^{\ast}(\xi)\leq 
\min\left\{ m+(n-1)\frac{\widehat{w}_{n}^{\ast}(\xi)}{w_{m}(\xi)}, w_{m}(\xi)\right\}.
\end{equation}
In particular, for any integer $n\geq 1$ and any transcendental real $\xi$ we have
\[
\widehat{w}_{n}^{\ast}(\xi)\leq n+(n-1)\frac{\widehat{w}_{n}^{\ast}(\xi)}{w_{n}(\xi)}. 
\]
\end{thm}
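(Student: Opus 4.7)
The plan is to follow the proof of Theorem~\ref{zendent}, replacing the auxiliary polynomial of degree at most $n$ (coming from $\widehat{w}_n(\xi)$) by the minimal polynomial $p_\alpha$ of a near-best algebraic approximation $\alpha$ to $\xi$ of degree $d\leq n$ (provided by $\widehat{w}_n^*(\xi)$). The target~\eqref{eq:value} will split into two inequalities. The bound $\widehat{w}_n^*(\xi)\leq w_m(\xi)$ is extracted from the hypotheses: when $m\geq n$, from the chain $\widehat{w}_n^*(\xi)\leq\widehat{w}_n(\xi)\leq w_n(\xi)\leq w_m(\xi)$; when $w_m(\xi)>n+m-1$, from Theorem~\ref{zeitung}, which forces $\widehat{w}_n(\xi)\leq m+n-1<w_m(\xi)$; and when $w_m(\xi)>w_n^*(\xi)$, from $\widehat{w}_n^*(\xi)\leq w_n^*(\xi)$ directly.

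For the main inequality $\widehat{w}_n^*(\xi)\leq m+(n-1)\widehat{w}_n^*(\xi)/w_m(\xi)$, fix $w'<w_m(\xi)$, $\widehat{w}'<\widehat{w}_n^*(\xi)$, and an auxiliary parameter $\tau>0$. For arbitrarily large $H$, the definition of $w_m$ yields $P\in\mathbb{Z}[X]$ of degree at most $m$ with $H(P)\leq H$ and $|P(\xi)|\leq H^{-w'}$; the definition of $\widehat{w}_n^*$ at threshold $H^\tau$ provides an algebraic $\alpha$ of degree $d\leq n$ and height $A=H^\beta\leq H^\tau$ satisfying $|\xi-\alpha|\leq A^{-1}H^{-\tau\widehat{w}'}$. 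The key two-sided estimate
\[
c\,H^{-(n-1)-m\beta}\;\leq\;|P(\alpha)|\;\leq\;H^{-w'}+c'\,H^{1-\beta-\tau\widehat{w}'}
\]
combines a first-order Taylor expansion of $P$ at $\xi$ (upper bound) with the identity $|\operatorname{Res}(P,p_\alpha)|\geq 1$ and the Mahler-measure bound on $|P(\alpha_j)|$ at the conjugates of $\alpha$ (lower bound, valid whenever $P(\alpha)\neq 0$). Comparing exponents forces either $w'\leq(n-1)+m\beta$ or $\tau\widehat{w}'\leq n+(m-1)\beta$; together with the constraint $\beta\leq\tau$, an optimal choice of $\tau$ followed by the passages $w'\to w_m(\xi)$ and $\widehat{w}'\to\widehat{w}_n^*(\xi)$ yields the claimed bound. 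The ``in particular'' statement is then the specialization $m=n$.

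The main obstacle is the degenerate case $P(\alpha)=0$, i.e., $p_\alpha\mid P$, in which the lower estimate above is vacuous. Writing $P=p_\alpha\cdot Q_0$ with $Q_0\in\mathbb{Z}[X]$ via Gauss's lemma, of degree $m-d$ and with height controlled through the multiplicativity $M(P)=M(p_\alpha)M(Q_0)$, one iterates the argument with $Q_0$ in place of $P$ until coprimality is reached. The dichotomy ``$m\geq n$ or~\eqref{eq:negate}'' is precisely what is needed to close this iteration: the case $m\geq n$ permits repeated factor removal without degree collapse, the branch $w_m(\xi)>n+m-1$ terminates via Theorem~\ref{zeitung}, and the branch $w_m(\xi)>w_n^*(\xi)$ produces a contradiction with the definition of $w_n^*$ when applied to the root $\alpha$ of $P$. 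The delicate bookkeeping of heights, degrees, and of the parameter $\tau$ in this factorization scenario is the principal technical difficulty.
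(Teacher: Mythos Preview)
Your overall architecture is the paper's: replace the auxiliary polynomial $Q$ in the proof of Theorem~\ref{zendent} by the minimal polynomial $p_\alpha$ of a near-best algebraic approximation $\alpha$ (so that this polynomial is automatically irreducible), and then feed the pair $(P,p_\alpha)$ into a resultant bound. Your reduction of $\widehat{w}_n^{\ast}(\xi)\le w_m(\xi)$ to the three sub-hypotheses is also fine.

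The genuine gap is your treatment of the degenerate case $p_\alpha\mid P$. The iteration ``replace $P$ by $Q_0=P/p_\alpha$'' does not carry a usable upper bound: you controlled $|P(\xi)|\le H^{-w'}$, but $|Q_0(\xi)|=|P(\xi)|/|p_\alpha(\xi)|$ can be \emph{large}, since $|p_\alpha(\xi)|\ll H(p_\alpha)\,|\xi-\alpha|\ll H^{-\tau\widehat w'}$ is itself small. After one step you have lost the very inequality that fed your two-sided comparison, and nothing in your sketch supplies a substitute. The same difficulty undermines the claim that the branch $w_m(\xi)>m+n-1$ ``terminates via Theorem~\ref{zeitung}'': that theorem yields only $\widehat w_n(\xi)\le m+n-1$, which does \emph{not} imply the left inequality of~\eqref{eq:value} once $w_m(\xi)>m+n-1$ (the target $m\,w_m(\xi)/(w_m(\xi)-n+1)$ is then strictly below $m+n-1$), so you still need coprimality to finish.

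The paper eliminates the degeneracy with a device you omit: by Wirsing's Hilfssatz~4 one may take $P$ itself \emph{irreducible}, and one records the two-sided estimate $H(P)^{-w_m(\xi)-\epsilon}\le|P(\xi)|\le H(P)^{-w_m(\xi)+\epsilon}$ rather than just the upper bound. Setting $X=H(P)^{\tau^*}$ with $\tau^*\approx w_m(\xi)/\widehat w_n^{\ast}(\xi)$ then forces $|p_\alpha(\xi)|<|P(\xi)|$, hence $P\ne p_\alpha$; since both are irreducible they are coprime, and Lemma~\ref{davschm} applies directly to $|P(\xi)|$ and $|p_\alpha(\xi)|$ (so your extra parameter $\beta$ never enters). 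From there the case analysis ($m\ge n$ using an analogue of~\eqref{eq:infer}; or $m<n$ split according to $H(P)\lessgtr H(Q)$) parallels Theorem~\ref{zendent} and either produces~\eqref{eq:value} or shows that~\eqref{eq:negate} fails.
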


By \eqref{eq:glmschr}, 
the inequality \eqref{eq:value} always holds for $m \ge 2n-1$, thus 
Theorem \ref{simulterne} is of interest only for $1 \le m \le 2n-2$.

Let $m \ge 2$ be an integer. According to LeVeque \cite{leveque}, a real
number $\xi$ is a $U_m$-number if $w_m (\xi)$ is infinite and
$w_{m-1} (\xi)$ is finite. Furthermore, the $U_1$-numbers are
precisely the Liouville numbers, that is, the real  
numbers for which the inequalities $0 < \vert \xi-p/q\vert < q^{-w}$ have infinitely  
many rational solutions $p/q$ for every real number $w$.     
A $T$-number is a real number $\xi$ such that $w_n (\xi)$ is finite 
for every integer $n$ and $\limsup_{n \to + \infty} \, \frac{w_n (\xi)}{n} = + \infty $. 
LeVeque \cite{leveque} proved the 
existence of $U_m$-numbers for every positive integer $m$. 
Schmidt~\cite{tnumbers} was the first to confirm that $T$-numbers do exist. 
Additional results on $U_m$- and $T$-numbers and on Mahler's 
classification of real numbers are given in \cite{bugbuch}. 
The next statement is an easy consequence of our theorems. 

\begin{cor} \label{mahlerklasse}
Let $m$ be a positive integer. Every $U_m$-number $\xi$ 
satisfies $\widehat{w}_{m} (\xi) = m$
and the inequalities $\widehat{w}_{n}^{\ast}(\xi)\leq m$ and $\widehat{w}_{n}(\xi)\leq m+n-1$   
for every integer $n\geq 1$. 
Moreover, every $T$-number $\xi$ satisfies 
$\liminf_{n\to +\infty}  \frac{\widehat{w}_{n}(\xi)}{n} =1$.
\end{cor}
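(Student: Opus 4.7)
The plan is to derive every assertion of the corollary as a direct consequence of Theorems~\ref{zendent}, \ref{zeitung}, and~\ref{simulterne}, exploiting the defining property $w_m(\xi) = +\infty$ of a $U_m$-number (resp.\ the unboundedness of $w_m(\xi)/m$ for a $T$-number) to make the quotients $\widehat{w}_n(\xi)/w_m(\xi)$ appearing in those theorems collapse to zero (resp.\ to pick convenient indices $n_k$).

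Fix a $U_m$-number $\xi$. The bound $\widehat{w}_n(\xi) \le m+n-1$ for all $n \ge 1$ is immediate from Theorem~\ref{zeitung}, because $w_m(\xi) = +\infty$ forces the minimum on the left-hand side to equal $\widehat{w}_n(\xi)$. For $\widehat{w}_n^\ast(\xi) \le m$ I would apply Theorem~\ref{simulterne}: when $n \le m$ the hypothesis $m \ge n$ is met, and when $n > m$ the alternative hypothesis~\eqref{eq:negate} holds vacuously since $w_m(\xi) = +\infty$. In either case~\eqref{eq:value} yields $\widehat{w}_n^\ast(\xi) \le m + (n-1)\widehat{w}_n^\ast(\xi)/w_m(\xi) = m$, after noting that $\widehat{w}_n^\ast(\xi)$ is finite by~\eqref{eq:glmschr}. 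For $\widehat{w}_m(\xi) = m$, Dirichlet's inequality~\eqref{eq:derdirichlet} supplies the lower bound, while for $m \ge 2$ the matching upper bound follows from Theorem~\ref{zendent} applied with its parameter $n$ taken equal to the $U_m$-index $m$: since $w_{m-1}(\xi) < +\infty = w_m(\xi)$, alternative~\eqref{eq:synode} fails and~\eqref{eq:denngiltsea} gives $\widehat{w}_m(\xi) \le m$. The edge case $m=1$ (Liouville numbers) is covered by the Khintchine bound $\widehat{w}_1(\xi) \le 1$ already contained in~\eqref{eq:glmschr}.

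For the $T$-number statement, the inequality $\liminf_n \widehat{w}_n(\xi)/n \ge 1$ is again Dirichlet~\eqref{eq:derdirichlet}. For the reverse inequality, I would use $\limsup_{m \to +\infty} w_m(\xi)/m = +\infty$ to extract a sequence $m_k \to +\infty$ with $w_{m_k}(\xi)/m_k \to +\infty$ (the sequence must be unbounded, else some $w_{m_0}(\xi)$ would have to be infinite, contradicting that $\xi$ is a $T$-number), and then set $n_k := \lfloor w_{m_k}(\xi) - m_k \rfloor$. This choice guarantees both $n_k \to +\infty$ and the strict inequality $w_{m_k}(\xi) > m_k + n_k - 1$, so Theorem~\ref{zeitung} forces $\widehat{w}_{n_k}(\xi) \le m_k + n_k - 1$. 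Dividing yields
\[
\frac{\widehat{w}_{n_k}(\xi)}{n_k} \le 1 + \frac{m_k - 1}{n_k} \longrightarrow 1,
\]
since $m_k / n_k \to 0$ by construction.

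There is no genuine obstacle; the whole argument is a dictionary translation of the three input theorems. The only points demanding care are the verification that $\widehat{w}_n(\xi)$ and $\widehat{w}_n^\ast(\xi)$ are finite, so that the quotients in~\eqref{eq:denngiltsea} and~\eqref{eq:value} actually vanish when $w_m(\xi) = +\infty$, and, in the $T$-number part, the choice of $n_k$ that simultaneously tends to infinity and satisfies the strict inequality needed to extract non-trivial information from Theorem~\ref{zeitung}.
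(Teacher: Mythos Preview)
Your argument is correct. The $U_m$-part is essentially the paper's own proof: both derive $\widehat{w}_m(\xi)=m$ from Theorem~\ref{zendent} (with Khintchine for $m=1$) and $\widehat{w}_n^\ast(\xi)\le m$ from Theorem~\ref{simulterne}, the only cosmetic difference being that you read off $\widehat{w}_n(\xi)\le m+n-1$ directly from Theorem~\ref{zeitung}, while the paper deduces it from $\widehat{w}_n^\ast(\xi)\le m$ via~\eqref{eq:sternhut}.

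The $T$-number part, however, follows a genuinely different route. The paper works at indices $n$ where $w_n(\xi)>w_{n-1}(\xi)$ and $w_n(\xi)\ge Cn$ simultaneously (such indices exist because $w_n(\xi)$ is nondecreasing, finite, and $w_n(\xi)/n$ is unbounded, so the jump indices themselves carry the large ratios), and then applies Theorem~\ref{zendent} with $m=n$ together with the Davenport--Schmidt bound~\eqref{eq:glmschr} to obtain $\widehat{w}_n(\xi)\le n(1+2/C)$. Your approach instead fixes $m_k$ with $w_{m_k}(\xi)/m_k\to\infty$, sets $n_k=\lfloor w_{m_k}(\xi)-m_k\rfloor$, and invokes only Theorem~\ref{zeitung}. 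This is a bit more elementary---it avoids both the jump-index argument and the appeal to~\eqref{eq:glmschr}---at the cost of producing the subsequence $(n_k)$ somewhat indirectly. Both proofs are short; yours uses the weaker of the two input theorems.
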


\begin{proof}
Let $m$ be a positive integer and $\xi$ a $U_m$-number. 
We have already mentioned that $\widehat{w}_{1} (\xi) = 1$. 
For $m \ge 2$, we have $w_{m-1} (\xi) < w_m (\xi)$ and 
we get from Theorem~\ref{zendent} that $\widehat{w}_{m} (\xi) = m$.
The bound for $\widehat{w}_{n}^{\ast}(\xi)$ follows from \eqref{eq:value} 
as we check the conditions are satisfied in both cases $m\geq n$ and $n < m$.   
from the inequalities $\widehat{w}_{n}^{\ast}(\xi) \le \widehat{w}_{m}^{\ast}(\xi)\le \widehat{w}_{m}(\xi)$.  
The upper bound  
$\widehat{w}_{n}(\xi)\leq m+n-1$ is then a consequence of \eqref{eq:sternhut}.
Let $\xi$ be a $T$-number. Then, for any positive real number $C$, there are
arbitrarily large integers $n$ such that $w_{n}(\xi)>w_{n-1}(\xi)$ and 
$w_{n}(\xi)\geq C n$. For such an $n$,
inserting these relations in \eqref{eq:denngiltsea} with $m=n$ and using \eqref{eq:glmschr}, we obtain
\[
\widehat{w}_{n}(\xi)\leq n+\frac{(n-1)(2n-1)}{Cn}<n\cdot\left(1+\frac{2}{C}\right).
\]
It is then sufficient to let $C$ tend to infinity.   
\end{proof}

Roy \cite{daroy} proved that every extremal number $\xi$ satisfies 
\begin{equation} \label{eq:gliech}
w_{2}(\xi) =\sqrt{5}+2 = 4.2361 \ldots = (\widehat{w}_{2}(\xi) - 1) \widehat{w}_{2}(\xi),
\end{equation}
thus provides a non-trivial example that equality
can hold in \eqref{eq:ssmj}.  
Approximation to extremal numbers by algebraic numbers of bounded degree
was studied in \cite{adbu,Royzelo}. We deduce from 
Theorems \ref{simulterne} and \ref{zeitung} some additional information. 

\begin{cor} \label{sternschr}
Every extremal number $\xi$ satisfies 
$$
\widehat{w}_{3}^{\ast}(\xi)\leq 3 \, \frac{2+\sqrt{5}}{1+\sqrt{5}} = 3.9270\ldots 
\quad \hbox{and} \quad 
\widehat{w}_{3}(\xi)\leq 4.
$$
\end{cor}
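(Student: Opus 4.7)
The plan is to prove the two bounds separately, both relying on the value $w_2(\xi)=2+\sqrt{5}$ furnished by Roy's identity~\eqref{eq:gliech} and on the arithmetic fact that $2+\sqrt{5}>4$.

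For the bound $\widehat{w}_3(\xi)\le 4$, I would apply Theorem~\ref{zeitung} with $m=2$ and $n=3$, which yields
\[
\min\{w_2(\xi),\widehat{w}_3(\xi)\}\le m+n-1=4.
\]
Since $w_2(\xi)=2+\sqrt{5}>4$, the minimum must be realized by $\widehat{w}_3(\xi)$, and we conclude $\widehat{w}_3(\xi)\le 4$.

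For the bound on $\widehat{w}_3^*(\xi)$, I would apply Theorem~\ref{simulterne} with $n=3$ and $m=2$. As $m<n$, I first have to verify condition~\eqref{eq:negate}: one needs $w_2(\xi)>\min\{4,w_3^*(\xi)\}$, which is immediate from $w_2(\xi)=2+\sqrt{5}>4\ge\min\{4,w_3^*(\xi)\}$. Theorem~\ref{simulterne} then delivers
\[
\widehat{w}_3^*(\xi)\le 2+\frac{2\,\widehat{w}_3^*(\xi)}{2+\sqrt{5}},
\]
and solving this linear inequality for $\widehat{w}_3^*(\xi)$ gives
\[
\widehat{w}_3^*(\xi)\le\frac{2(2+\sqrt{5})}{\sqrt{5}}=\frac{10+4\sqrt{5}}{5}\approx 3.789,
\]
which is strictly smaller than $3(2+\sqrt{5})/(1+\sqrt{5})\approx 3.927$, so the stated bound follows \emph{a fortiori}.

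There is no real obstacle: the argument is mechanical once condition~\eqref{eq:negate} has been checked, and that check is painless thanks to $w_2(\xi)>n+m-1=4$. The only substantive conceptual point is recognizing that, among the two arguments of the $\min$ in Theorem~\ref{simulterne}, the first term $m+(n-1)\widehat{w}_n^*(\xi)/w_m(\xi)$ is the binding one, while the trivial second term $w_m(\xi)=2+\sqrt{5}$ is too loose to recover the corollary's bound by itself.
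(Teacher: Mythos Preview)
Your argument is correct and matches the paper's own proof: both apply Theorem~\ref{zeitung} and Theorem~\ref{simulterne} with $m=2$, $n=3$, using Roy's value $w_2(\xi)=2+\sqrt{5}>4=m+n-1$ to check the hypothesis~\eqref{eq:negate} and to force the minimum in Theorem~\ref{zeitung} onto $\widehat{w}_3(\xi)$. You are also right that solving the inequality coming from~\eqref{eq:value} actually yields the sharper constant $2(2+\sqrt{5})/\sqrt{5}\approx 3.789$, which of course implies the bound $3(2+\sqrt{5})/(1+\sqrt{5})\approx 3.927$ stated in the corollary.
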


\begin{proof}
Let $m=2, n=3$ and $\xi$ be an extremal number.
By \eqref{eq:gliech} we have $w_{2}(\xi)=2+\sqrt{5}>4=m+n-1$ and the 
first claim follows from \eqref{eq:value}. 
Theorem~\ref{zeitung} implies the second assertion.
\end{proof}

We conclude this section by a new relation between the exponents 
$\widehat{w}_{n}$ and $w_n^{\ast}$.

\begin{thm} \label{hatandstar}
For every positive integer $n$ and every transcendental real number $\xi$, we have
$$
\widehat{w}_{n} (\xi) \le \frac{2 (w_n^{\ast} (\xi) + n) - 1}{3}
$$
and, if $w_{n} (\xi) \le 2n-1$, 
\begin{equation} \label{eq:fussball}
\widehat{w}_{n}^{\ast}(\xi)\geq 
\frac{2w_{n}^{\ast}(\xi)^{2}-w_{n}^{\ast}(\xi)-2n+1}
{2w_{n}^{\ast}(\xi)^{2}-nw_{n}^{\ast}(\xi)-n}.
\end{equation}
\end{thm}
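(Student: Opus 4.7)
The plan is to derive both inequalities from the definition of $\widehat{w}_n(\xi)$ combined with the bound coming from $w_n^*(\xi)$, through a root-and-resultant argument. Throughout, fix $\varepsilon > 0$ small. By the definition of $\widehat{w}_n(\xi)$, for every sufficiently large $H$ there is a polynomial $Q_H \in \mathbb{Z}[X]$ of degree at most $n$ with $H(Q_H) \le H$ and $|Q_H(\xi)| \le H^{-\widehat{w}_n(\xi)+\varepsilon}$. Letting $\alpha_H$ denote a root of $Q_H$ closest to $\xi$, the algebraic number $\alpha_H$ has degree at most $n$, its minimal polynomial divides $Q_H$, and $H(\alpha_H) \ll H$, while $|\xi-\alpha_H| \ll |Q_H(\xi)|/|Q_H'(\alpha_H)|$ by the standard estimate.

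For the first inequality, I would exploit two such polynomials $Q_{H}$ and $Q_{H'}$ at scales $H$ and $H' = H^{\tau}$ for an exponent $\tau > 1$ to be optimised. Either the two polynomials share a common factor --- in which case one replaces them by that factor and iterates --- or they are coprime. In the coprime case, the resultant $\mathrm{Res}(Q_H,Q_{H'})$ is a nonzero integer, so $|\mathrm{Res}(Q_H,Q_{H'})| \ge 1$, and the classical upper bound for resultants combined with $|\xi-\alpha_{H'}| \gg H(\alpha_{H'})^{-w_n^*(\xi)-1-\varepsilon}$ (valid for all but finitely many $\alpha_{H'}$, by the very definition of $w_n^*(\xi)$) forces a linear inequality between $\widehat{w}_n(\xi)$, $w_n^*(\xi)$, $n$ and $\tau$. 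Solving for $\widehat{w}_n(\xi)$ at the $\tau$ that minimises the bound produces the asserted estimate, the factor $2/3$ coming out of the optimisation step.

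The most delicate point will be the case analysis when $Q_H$ and $Q_{H'}$ share a common factor, because the minimal polynomial of $\alpha_H$ may have strictly smaller degree than $n$, which affects both the height bound on $\alpha_H$ and the Liouville-type estimate $|Q_{H'}(\alpha_H)| \gg H(Q_{H'})^{1-\deg\alpha_H}$ used in the resultant estimate. A descent argument on the degree of $\alpha_H$, in the spirit of the proofs of Theorems \ref{zendent} and \ref{simulterne}, handles this case and shows that the bound is independent of $\deg\alpha_H$.

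For the second inequality, I would refine the above by tracking $|\xi-\alpha_H|$ as a function of the parameter $H$ (and not merely of $H(\alpha_H)$), which is exactly what the uniform exponent $\widehat{w}_n^*(\xi)$ measures. The hypothesis $w_n(\xi) \le 2n-1$, together with \eqref{eq:sternhut} and \eqref{eq:wirrwarr}, is used to ensure that no scale $H$ is left uncovered by an appropriate approximation $\alpha_H$, so the algebraic approximants constructed above realise the uniform exponent. Rearranging the resulting inequality as a rational function in $\widehat{w}_n^*(\xi)$ and $w_n^*(\xi)$, and cancelling a linear factor that appears in both numerator and denominator of a preliminary form of the bound, gives the quadratic-over-quadratic estimate \eqref{eq:fussball}.
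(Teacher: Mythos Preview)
Your outline does not match the paper's argument and, more importantly, has genuine gaps where the numerical constants should emerge.

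The paper does not build a fresh resultant argument. It quotes Wirsing's proof of $w_n^*(\xi)\ge (n+1)/2$ as presented in \cite{bugbuch}, changing a single step: the Dirichlet bound $|Q_k(\xi)|\ll H(P_k)^{-n}$ is replaced by $|Q_k(\xi)|\ll H(P_k)^{-\widehat{w}_n(\xi)+\varepsilon}$. Here $P_k$ is a polynomial realizing $w_n(\xi)$ and $Q_k$ comes from $\widehat{w}_n(\xi)$ at the scale $H(P_k)$; they are \emph{not} two $\widehat{w}_n$-polynomials at scales $H$ and $H^\tau$. Wirsing's case analysis on the location of the roots of $P_k$ and $Q_k$ then gives directly
\[
w_n^*(\xi)\ge \min\Bigl\{\widehat{w}_n(\xi),\ w_n(\xi)-\tfrac{n-1}{2}+\tfrac{\widehat{w}_n(\xi)-n}{2},\ \tfrac{w_n(\xi)+1}{2}+\widehat{w}_n(\xi)-n\Bigr\},
\]
and both assertions of the theorem follow from this by algebra together with the Bugeaud--Laurent inequalities \eqref{eq:wirrwarr} and $\widehat{w}_n^*(\xi)\ge w_n(\xi)/(w_n(\xi)-n+1)$.

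In your plan, the resultant of two $\widehat{w}_n$-polynomials $Q_H,Q_{H^\tau}$ yields only inequalities of the shape $\widehat{w}_n(\xi)\le n-1+n\tau$ or $\widehat{w}_n(\xi)\le n-1+n/\tau$, with no appearance of $w_n^*$. You then want to feed in $|\xi-\alpha_{H'}|\gg H(\alpha_{H'})^{-w_n^*-1-\varepsilon}$, but the link between $|\xi-\alpha_{H'}|$ and $|Q_{H'}(\xi)|$ is governed by $|Q_{H'}'(\alpha_{H'})|$, which you never bound; controlling this derivative is precisely what produces the three-case minimum above, and no single choice of $\tau$ will yield the factor $2/3$ without that case analysis. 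For \eqref{eq:fussball} you cite \eqref{eq:sternhut} and \eqref{eq:wirrwarr} but omit the decisive input $\widehat{w}_n^*(\xi)\ge w_n(\xi)/(w_n(\xi)-n+1)$, and you give no mechanism by which the specific quadratic-over-quadratic expression would appear. As it stands the sketch is a description of the general Wirsing machinery rather than a proof of either inequality.
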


It follows from the first assertion of Theorem \ref{hatandstar} that any counterexample $\xi$ 
to the Wirsing conjecture, that is, any transcendental real number 
$\xi$ with $w_n^{\ast} (\xi) < n$ for some integer $n \ge 3$, must satisfy  
$\widehat{w}_{n} (\xi) < \frac{4n - 1}{3}$. 

It follows from the second assertion of Theorem \ref{hatandstar} that if 
$w_n^{\ast} (\xi)$ is close to $\frac{n}{2}$ for some integer $n$
and some real transcendental number $\xi$, then 
$\widehat{w}_{n}^{\ast}(\xi)$ is also close to $\frac{n}{2}$. 
Note that \eqref{eq:sternhut} implies that \eqref{eq:fussball}  
holds for any couterexample $\xi$ to the Wirsing conjecture.

Theorem \ref{hatandstar} can be combined with \eqref{eq:wirrwarr} to get
a lower bound for $w_n^{\ast} (\xi)$ which is slightly smaller than the one 
obtained by Bernik and Tsishchanka \cite{bertish}. However, if we insert \eqref{eq:ssmj} 
in the proof of Theorem \ref{hatandstar}, then we get 
$$
{w}_{n}^{\ast} (\xi) \ge \max \Bigl\{ \widehat{w}_{n}(\xi), 
\frac{\widehat{w}_{n}(\xi)}{\widehat{w}_{n}(\xi)-n+1}, 
\frac{n-1}{2}\cdot \frac{\widehat{w}_{n}(\xi)^{2}-\widehat{w}_{n}(\xi)}
{1+(n-2)\widehat{w}_{n}(\xi)} + \widehat{w}_{n} (\xi) - n +\frac{1}{2}\Bigr\}.
$$
From this we derive a very slight improvement of \eqref{eq:bertis}, which,
like \eqref{eq:bertis}, has the form 
${w}_{n}^{\ast} (\xi) \ge \frac{n}{2} + 2 - \varepsilon_n$, where $\varepsilon_n$ is
positive and tends to $0$ when $n$ tends to infinity. Note that the best known
lower bound, established by Tsishchanka \cite{Tsi07}, 
has the form 
${w}_{n}^{\ast} (\xi) \ge \frac{n}{2} + 3 - \varepsilon'_n$, where $\varepsilon'_n$ is
positive and tends to $0$ when $n$ tends to infinity.

\section{Proofs}

We first show how Theorem~\ref{giltgleichheit} follows from 
Theorems~\ref{zendent} and~\ref{zeitung}. 

\begin{proof}[Proof of Theorem~\ref{giltgleichheit}] 

We distinguish two cases. 

If $w_{n-1}(\xi) = w_{n}(\xi)$,  then Theorem~\ref{zeitung} with $m=n-1$ implies that either
$$
\widehat{w}_{n}(\xi)\leq w_{n}(\xi)=w_{n-1}(\xi)\leq n-1+n-1 = 2n-2
$$
or
$$
\widehat{w}_{n}(\xi)\leq 2n-2.
$$
It then suffices to observe that $2n-2$ is smaller than the bounds 
in \eqref{eq:tomcat1} and \eqref{eq:tomcat2}.

If $w_{n-1}(\xi) < w_{n}(\xi)$, then we apply Theorem~\ref{zendent} with $m=n$ and we get
\[
\widehat{w}_{n} (\xi)\leq n+(n-1)\frac{\widehat{w}_{n}(\xi)}{w_{n}(\xi)},
\]
thus,
\begin{equation} \label{eq:borne1}
\widehat{w}_{n} (\xi)\leq  \frac{nw_{n}(\xi)}{w_{n}(\xi)-n+1}. 
\end{equation}
Rewriting inequality \eqref{eq:ssmj} as
\begin{equation} \label{eq:borne2}
\widehat{w}_{n}(\xi)\leq 
\frac{1}{2}\left(1+\frac{n-2}{n-1}w_{n}(\xi)\right)+
\sqrt{\frac{1}{4}\left(\frac{n-2}{n-1}w_{n}(\xi)+1\right)^{2}+\frac{w_{n}(\xi)}{n-1}}, 
\end{equation}
we have now two upper bounds for $\widehat{w}_{n}(\xi)$, one being given by 
a decreasing function and the other one by an increasing function of $w_{n}(\xi)$.     
An easy calculation shows that the
right hand sides of \eqref{eq:borne1} and \eqref{eq:borne2} are equal for
$$
w_{n}(\xi)= \frac{1}{2}\left(\frac{1 + 2n\sqrt{n^{2}-2n+\frac{5}{4}}}{n-1}+2n-1\right)
$$
Inserting this value in \eqref{eq:borne1} gives precisely the upper bound \eqref{eq:tomcat1}. 
For \eqref{eq:tomcat2} we proceed similarly 
using \eqref{eq:beesser} instead of \eqref{eq:ssmj}.
\end{proof}

For the proofs of Theorems~\ref{zendent} and~\ref{zeitung}
we need the following slight variation of~\cite[Lemma 8]{davsh}.

The notation $a \gg_d b$ means that $a$ exceeds $b$ times a constant
depending only on $d$. When $\gg$ is written without any subscript, it
means that the constant is absolute.

\begin{lem} \label{davschm}
Let $P, Q$ be coprime polynomials with integral coefficients 
of degrees at most $m$ and $n$, respectively. 
Let $\xi$ be a real number such that $\xi P(\xi) Q(\xi) \not= 0$. 
Then at least one of the two estimates 
\[
\vert P(\xi)\vert \gg_{m,n,\xi} H(P)^{-n+1}H(Q)^{-m}, 
\qquad \vert Q(\xi)\vert \gg_{m,n,\xi} H(P)^{-n}H(Q)^{-m+1}
\]
holds. In particular, we have 
\[
\max\{\vert P(\xi)\vert,\vert Q(\xi)\vert\} \gg_{m,n,\xi} H(P)^{-n+1}H(Q)^{-m+1}\min\{H(P)^{-1},H(Q)^{-1}\}.
\]
\end{lem}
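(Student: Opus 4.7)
The plan is to exploit the resultant. Since $P$ and $Q$ are coprime integer polynomials, the resultant $R=\mathrm{Res}(P,Q)$ is a nonzero integer and therefore $|R|\geq 1$. The task reduces to an upper bound for $|R|$ in which the contributions of $P(\xi)$ and $Q(\xi)$ are cleanly separated.

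Concretely, I would write $R$ as the determinant of the $(m+n)\times(m+n)$ Sylvester matrix $S$ of $P$ and $Q$, with the first $n$ rows carrying shifted coefficients of $P$ and the last $m$ rows carrying shifted coefficients of $Q$. The main move is the classical determinant-preserving column operation: to the last column of $S$, add $\sum_{k=1}^{m+n-1} \xi^{m+n-k}\cdot(\text{column }k)$. Since each row of the Sylvester matrix encodes the entrywise coefficients of $X^{\,\ell}P(X)$ for some $\ell$, or $X^{\,\ell}Q(X)$ for some $\ell$, a direct check shows that after this operation the new last column carries the entries $\xi^{i-1}P(\xi)$ for $i=1,\dots,n$ in the P-rows and $\xi^{j-1}Q(\xi)$ for $j=1,\dots,m$ in the Q-rows.

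Next I would Laplace-expand this transformed determinant along the last column. Each cofactor is an $(m+n-1)\times(m+n-1)$ minor of $S$ with exactly one row deleted. If a P-row is deleted, the surviving matrix has $n-1$ rows from $P$ and $m$ rows from $Q$, so Hadamard's inequality bounds the minor by a constant depending only on $m,n$ times $H(P)^{n-1}H(Q)^{m}$; symmetrically, deleting a Q-row yields a minor bounded by $H(P)^{n}H(Q)^{m-1}$. Absorbing the harmless $\xi^{i-1},\xi^{j-1}$ factors into the implicit constant, we arrive at
$$
1 \leq |R| \ll_{m,n,\xi} |P(\xi)|\,H(P)^{n-1}H(Q)^{m} + |Q(\xi)|\,H(P)^{n}H(Q)^{m-1}.
$$
At least one of the two summands on the right is then $\gg_{m,n,\xi} 1$, which is exactly the stated dichotomy between the estimates for $|P(\xi)|$ and $|Q(\xi)|$. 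The closing "max" formulation is a one-line rewriting, since each of the two alternatives can be rearranged as $\max\{|P(\xi)|,|Q(\xi)|\}\gg_{m,n,\xi} H(P)^{-n+1}H(Q)^{-m+1}\min\{H(P)^{-1},H(Q)^{-1}\}$.

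The only real obstacle is bookkeeping: fixing a Sylvester convention so that the column operation genuinely produces the clean entries $\xi^{i-1}P(\xi)$ and $\xi^{j-1}Q(\xi)$ in the last column, and confirming that the relevant $(m+n-1)\times(m+n-1)$ minors really split as $(n-1)$ rows of $P$-type plus $m$ rows of $Q$-type, or vice versa. Beyond that, the proof is the standard resultant-plus-Hadamard technique already employed in \cite{davsh}, and no genuinely new input is needed.
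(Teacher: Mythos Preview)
Your proposal is correct and follows essentially the same approach as the paper: bound the nonzero integer resultant from above via a column operation on the Sylvester matrix, then expand along the last column. One small caveat worth noting: the paper works with the \emph{actual} degrees $s\le m$ and $t\le n$ of $P$ and $Q$, so that the Sylvester matrix is $(s+t)\times(s+t)$; your $(m+n)\times(m+n)$ version would have vanishing determinant whenever $s<m$ or $t<n$, so the inequality $|R|\ge 1$ would be lost. This is precisely the bookkeeping you flagged, and the fix is immediate---just replace $m,n$ by the true degrees throughout and use $s\le m$, $t\le n$ at the end.
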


\begin{proof}
We proceed as in the proof of \cite[Lemma~8]{davsh} 
and we consider the resultant $\rm{Res}(P,Q)$
of the polynomials $P$ and $Q$, written as 
\begin{align*}
P(T)&=a_{0}T^{s}+a_{1}T^{s-1}+\cdots+a_{s}, \qquad a_{0}\neq 0, s\leq m,     \\
Q(T)&=b_{0}T^{t}+b_{1}T^{t-1}+\cdots+b_{t}, \qquad \;\; b_{0}\neq 0, t\leq n. 
\end{align*}
Clearly, $\vert \rm{Res}(P,Q)\vert$ is at least $1$ since $P$ and $Q$ are coprime.
Transform the corresponding $(s+t)\times (s+t)$-matrix
by adding to the last column the sum, for $i=1, \ldots , s+t-1$, of the $(s+t-i)$-th column 
multiplied by $\xi^{i}$, so that the last column reads    
\[
(\xi^{t-1}P(\xi),\xi^{t-2}P(\xi),\ldots,P(\xi),\xi^{s-1}Q(\xi),\xi^{s-2}Q(\xi),\ldots,Q(\xi)).
\]
This transformation does not affect the value of $\rm{Res}(P,Q)$.
Observe that by expanding the determinant of the new matrix, we get that 
every product in the sum is in absolute value 
either $\ll_{s,t,\xi} \vert P(\xi)\vert H(P)^{t-1}H(Q)^{s}$
or $\ll_{s,t,\xi} \vert Q(\xi)\vert H(P)^{t}H(Q)^{s-1}$. 
Since there are only $(s+t)!\leq (m+n)!$ 
such terms in the sum we infer that 
$$
1\leq \vert {\rm{Res}} (P,Q)\vert 
\ll_{m,n,\xi} \max\{ \vert P(\xi)\vert H(P)^{n-1}H(Q)^{m},\vert Q(\xi)\vert H(P)^{n}H(Q)^{m-1}\}.
$$
The lemma follows.
\end{proof}

\begin{proof}[Proof of Theorem~\ref{zendent}] 

It is inspired from the proof of ~\cite[Proposition 2.1]{buglaur}. 
Let $m \ge n\geq 2$ be integers.
Let $\xi$ be a transcendental real number.
Assume first that $w_{m}(\xi)< + \infty$. 
We will show that if \eqref{eq:synode} is not satisfied, that is, 
if we assume
\begin{equation} \label{eq:waterfront}
w_{n-1}(\xi)<w_{m}(\xi),
\end{equation}
then \eqref{eq:denngiltsea} must hold.  
Let $\epsilon>0$ be an arbitrary but fixed small real number. By the definition of $w_{m}(\xi)$ 
there exist integer 
polynomials $P$ of degree at most $m$ and arbitrarily large height $H(P)$ such that
\begin{equation} \label{eq:guteglyx}
H(P)^{-w_{m}(\xi)-\epsilon} \leq \vert P(\xi)\vert\leq H(P)^{-w_{m}(\xi)+\epsilon}.
\end{equation}
By an argument of Wirsing~\cite[Hilfssatz~4]{wirsing} (see also on page 54 of \cite{bugbuch}),
we may assume that $P$ is irreducible.
We deduce from our assumption \eqref{eq:waterfront} that, if $\epsilon$ is small enough, 
then $P$ has degree at least $n$.
Moreover, by the definition of $\widehat{w}_{n}(\xi)$, 
if the height $H(P)$ is sufficiently large, then    
for all $X\geq H(P)$ the inequalities 
\begin{equation} \label{eq:auchguteglyx}
0 < \vert Q(\xi)\vert \leq X^{-\widehat{w}_{n}(\xi)+\epsilon}
\end{equation}
are satisfied by an integer polynomial $Q$ of degree at most $n$ and height $H(Q)\leq X$. 
Set $\tau(\xi,\epsilon)=(w_{m}(\xi)+2\epsilon)/(\widehat{w}_{n}(\xi)-\epsilon)$ and note
that this quantity exceeds $1$.
Keep in mind that 
\begin{equation} \label{eq:epskloayx}
\lim_{\epsilon\to 0} \tau(\xi,\epsilon)= \frac{w_{m}(\xi)}{\widehat{w}_{n}(\xi)}.
\end{equation}
For any integer polynomial $P$ satisfying \eqref{eq:guteglyx}, set
$X=H(P)^{\tau(\xi,\epsilon)}$.
Then \eqref{eq:guteglyx} implies
\begin{equation} \label{eq:epschen}
\vert P(\xi)\vert \geq H(P)^{-w_{m}(\xi)-\epsilon}> 
H(P)^{-w_{m}(\xi)-2\epsilon}=X^{-\widehat{w}_{n}(\xi)+\epsilon}, 
\end{equation}
thus any polynomial 
$Q$ satisfying \eqref{eq:auchguteglyx} also satisfies $\vert Q(\xi)\vert<\vert P(\xi)\vert$. 
Since $P$ is irreducible of degree at least $n$ and $Q$ has degree at most $n$, this 
implies that $P$ and $Q$ are coprime.

On the other hand, by \eqref{eq:guteglyx}, we have the estimate
\[
\vert P(\xi)\vert \leq H(P)^{-w_{m}(\xi)+\epsilon}=
X^{(-w_{m}(\xi)+\epsilon)/\tau(\xi,\epsilon)}.
\]
Thus, by \eqref{eq:epskloayx}, we get
\begin{equation} \label{eq:varepschen}
\vert P(\xi)\vert \leq X^{-\widehat{w}_{n}(\xi)+\epsilon^{\prime}}, 
\end{equation}
for some $\epsilon^{\prime}$ which depends on $\epsilon$ and tends to $0$ as 
$\epsilon$ tends to $0$. Since $\vert Q(\xi)\vert<\vert P(\xi)\vert$ 
we obviously obtain
\begin{equation} \label{eq:tobifroschx}
\max\{\vert P(\xi)\vert,\vert Q(\xi)\vert\}\leq 
X^{-\widehat{w}_{n}(\xi)+\epsilon^{\prime}}.
\end{equation}
We have constructed pairs of integer polynomials $(P, Q)$ 
of arbitrarily large height and satisfying \eqref{eq:tobifroschx}. 

We show that, provided $H(P)$ was chosen large enough, we have
\begin{equation} \label{eq:infer}
H(Q) \geq H(P)^{1-\epsilon^{\prime\prime}},
\end{equation}
where $\epsilon^{\prime\prime}$ is again some variation of $\epsilon$ and tends to $0$ as $\epsilon$ does.
Observe that since $\vert Q(\xi)\vert < \vert P(\xi)\vert$ and by \eqref{eq:guteglyx} we have
\[
w_{m}(\xi)-\epsilon\leq -\frac{\log \vert P(\xi)\vert}{\log H(P)} \leq -\frac{\log \vert Q(\xi)\vert}{\log H(P)}.
\]
On the other hand
\[
-\frac{\log \vert Q(\xi)\vert}{\log H(Q)} \leq
w_{n}(\xi)+\epsilon
\]
holds since $Q$ has degree at most $n$ and can be considered of
sufficiently large height $H(Q)$. Moreover the assumption $m\geq n$ implies
$w_{m}(\xi)\geq w_{n}(\xi)$. Combination of these facts yields 
\[
\frac{\log H(Q)}{\log H(P)} = \left(-\frac{\log \vert Q(\xi)\vert}{\log H(P)}\right)
\cdot \left(-\frac{\log \vert Q(\xi)\vert}{\log H(Q)}\right)^{-1}
\geq \frac{w_{m}(\xi)-\epsilon}{w_{n}(\xi)+\epsilon}\geq \frac{w_{n}(\xi)-\epsilon}{w_{n}(\xi)+\epsilon},
\]
and we indeed infer \eqref{eq:infer} as $\epsilon$ tends to $0$.

Now observe that we can apply
Lemma~\ref{davschm} to the coprime polynomials $P$ and $Q$.
In case of $H(Q)\geq H(P)$ for infinitely many such pairs $(P,Q)$ we get
\begin{equation} \label{eq:liostadt2yx}
\max\{\vert P(\xi)\vert,\vert Q(\xi)\vert\} \gg_{m,n,\xi}
H(P)^{-n+1}H(Q)^{-m}\geq X^{-\frac{n-1}{\tau(\xi,\epsilon)}}  X^{-m}.
\end{equation}
Combining \eqref{eq:tobifroschx} and \eqref{eq:liostadt2yx} 
we deduce \eqref{eq:denngiltsea} as $\epsilon$ can be taken arbitrarily small. 
If otherwise $H(Q)<H(P)$ for all large pairs $(P,Q)$, Lemma~\ref{davschm} yields
\[
\max\{\vert P(\xi)\vert,\vert Q(\xi)\vert\} \gg_{m,n,\xi}
H(P)^{-n+1}H(Q)^{-m+1}\cdot H(P)^{-1}, 
\]
however since $H(Q)$ cannot be much smaller than $H(P)$
by \eqref{eq:infer} we similarly infer
\begin{align} \label{eq:liostadt2yxr}
\max\{\vert P(\xi)\vert,\vert Q(\xi)\vert\} 
\geq X^{-\frac{n-1}{\tau(\xi,\epsilon)}}  X^{-m+\epsilon^{\prime\prime\prime}},
\end{align}
where $\epsilon^{\prime\prime\prime}=1/(1-\epsilon^{\prime\prime})-1$ again tends to $0$ as $\epsilon$ does.
The claim follows again with $\epsilon$ to $0$ and we have completed
the proof of the case $w_{m}(\xi)<+\infty$.

If $w_{m}(\xi) = + \infty$, we take a sequence $(P_j)_{j \ge 1}$ of integer 
polynomials of degree at most $m$ with increasing heights and such that the quantity
$- \log |P_j (\xi)| / \log H(P_j)$ tends to infinity as $j$ tends to infinity. We proceed 
then exactly as above, by using this sequence of polynomials instead of 
the polynomials satisfying \eqref{eq:guteglyx}. We omit the details. 
\end{proof}

\begin{proof}[Proof of Theorem~\ref{zeitung}] 

We assume $n\geq 2$ and $w_{m}(\xi)< + \infty$, for similar reasons as 
in the previous proof. 
Let $\epsilon>0$ be an arbitrary but fixed small number. By the definition of $w_{m}(\xi)$, 
there exist integer
polynomials $P$ of degree at most $m$ and arbitrarily large height $H(P)$ such that
$$
\vert P(\xi)\vert\leq H(P)^{-w_{m}(\xi)+\epsilon/2}.
$$
Again, by using an argument of Wirsing \cite[Hilfssatz 4]{wirsing}, we can assume that $P$
is irreducible. Then, by \cite[Lemma A.3]{bugbuch}, there exists a 
real number $K(n)$ in ${(0,1)}$ such 
that no integer polynomial $Q$ of degree at most $n$ and whose
height satisfies $H(Q)\leq K(n)H(P)$ is a multiple of $P$. 
Set $X:= H(P)K(n)/2$. If $X$ is large enough, then the polynomial $P$ satisfies 
\begin{equation} \label{eq:jungle}
\vert P(\xi)\vert\leq X^{-w_{m}(\xi)+\epsilon}.
\end{equation}
On the other hand, by the definition of $\widehat{w}_{n}(\xi)$, 
we may consider only the polynomials 
$P$ for which $H(P)$ is sufficiently large, so that the estimate
\begin{equation} \label{eq:auchguteglyv}
0 < \vert Q(\xi)\vert \leq X^{-\widehat{w}_{n}(\xi)+\epsilon}
\end{equation}
holds for an integer polynomial $Q$ of degree at most $n$ and height $H(Q)\leq X$. 
Our choice of $X$ ensures that $Q$ is not a multiple of $P$. 
Since $P$ is irreducible, the polynomials
$P$ and $Q$ are coprime. Thus we may apply Lemma~\ref{davschm} which yields
\begin{equation} \label{eq:frisch}
\max\{\vert P(\xi)\vert,\vert Q(\xi)\vert\} \gg_{m,n,\xi} X^{-m-n+1}.
\end{equation}
Combining \eqref{eq:jungle}, \eqref{eq:auchguteglyv} and \eqref{eq:frisch}, we deduce that 
$\min\{w_{m}(\xi),\widehat{w}_{n}(\xi)\}\leq m+n-1$,
as $\epsilon$ can be taken arbitrarily small.
\end{proof}

\begin{proof}[Proof of Theorem~\ref{simulterne}]
Most estimates arise by a modification of the proof of Theorem~\ref{zendent}.
Define the irreducible polynomial $P$ as in the proof of Theorem~\ref{zendent}.
In that proof   
a difficulty occurs since the polynomial $Q$
which satisfies \eqref{eq:auchguteglyx} is not a priori coprime with $P$. 
The assumption \eqref{eq:waterfront} was used to guarantee that $Q$ is not a multiple of $P$. 

Here, instead of \eqref{eq:auchguteglyx}, we use the fact that,  
for all $X\geq H(P)$, the inequalities 
\begin{equation} \label{eq:neunund}
0 < |\xi - \beta| < H(\beta)^{-1} \, X^{-\widehat{w}_{n}^{\ast} (\xi) + \epsilon}
\end{equation}
are satisfied by an algebraic number $\beta$ of degree at most $n$ and height 
at most $X$. Let $Q$ be the minimal defining polynomial over ${\mathbb Z}$   
of such a $\beta$. Then a standard argument yields
\begin{equation} \label{eq:a1}
\vert Q(\xi)\vert \ll_{n,\xi} X^{-\widehat{w}_{n}^{\ast} (\xi) + \epsilon},
\end{equation}
see~\cite[Proposition~3.2]{bugbuch} 
(actually, this proves the left inequalities of \eqref{eq:sternhut}).
Next we define 
$\tau^{\ast}(\xi,\epsilon):=(w_{m}(\xi)+2\epsilon)/(\widehat{w}_{n}^{\ast}(\xi)-\epsilon)$
and set $X=H(P)^{\tau^{\ast}(\xi,\epsilon)}$. Similarly as in 
the proof of Theorem~\ref{zendent} 
we obtain the variant 
\begin{equation} \label{eq:lastlast}
\vert P(\xi)\vert \geq H(P)^{-w_{m}(\xi)-\epsilon}= 
H(P)^{\epsilon}H(P)^{-w_{m}(\xi)-2\epsilon}=H(P)^{\epsilon}X^{-\widehat{w}_{n}^{\ast}(\xi)+\epsilon}
\end{equation}
of \eqref{eq:epschen}. 
Observe that the combination of \eqref{eq:a1} and \eqref{eq:lastlast} 
implies that $\vert Q(\xi)\vert<\vert P(\xi)\vert$ and consequently 
$P\neq Q$, provided that $H(P)$ was chosen large enough.
On the other hand, with essentially the argument 
used to get \eqref{eq:varepschen}, we obtain 
\begin{equation} \label{eq:a2}
\vert P(\xi)\vert \leq X^{-\widehat{w}_{n}^{\ast}(\xi)+\tilde{\epsilon}}, 
\end{equation}
for some $\tilde{\epsilon}$ which depends on $\epsilon$ and tends to $0$ as 
$\epsilon$ tends to $0$.   
By \eqref{eq:a1} and $\vert Q(\xi)\vert<\vert P(\xi)\vert$ we infer
\begin{equation} \label{eq:tobifroschxxx}
\max\{\vert P(\xi)\vert,\vert Q(\xi)\vert\}\leq 
X^{-\widehat{w}_{n}^{\ast}(\xi)+\tilde{\epsilon}},
\end{equation}
an inequality similar to \eqref{eq:tobifroschx}. 

Now if $m\geq n$, we proceed as in the proof of Theorem~\ref{zendent} 
observing that we may apply Lemma~\ref{davschm} here since $P\neq Q$ 
and both $P$ and $Q$ are irreducible.
Indeed \eqref{eq:infer} holds for exactly the same reason
and distinguishing the cases $H(P)\leq H(Q)$ and $H(P)>H(Q)$
again gives \eqref{eq:liostadt2yx} and \eqref{eq:liostadt2yxr} respectively with 
$\tau$ replaced by $\tau^{\ast}$. 
%
%
This yields the left inequality of \eqref{eq:value}
whereas the right inequality $\widehat{w}_{n}^{\ast}(\xi)\leq w_{m}(\xi)$ is 
trivially implied in case of $m\geq n$.

If $m<n$ we treat the cases $H(P)\leq H(Q)$ and $H(P)>H(Q)$ separately. 
First consider the case $H(P)\leq H(Q)$ for infinitely many pairs $(P,Q)$ as above. 
In this case we again prove \eqref{eq:value}. The 
left inequality of \eqref{eq:value} is derived precisely as in the case $m\geq n$, as we did not
utilize \eqref{eq:infer} for the proof.
%
%
The other inequality $\widehat{w}_{n}^{\ast}(\xi)\leq w_{m}(\xi)$ remains to be shown. 
Assume otherwise $w_{m}(\xi)/\widehat{w}_{n}^{\ast}(\xi)<1$.
Then for sufficiently small $\epsilon$ also $\tau^{\ast}(\xi,\epsilon)<1$ and 
hence $H(Q)=H(\beta)\leq X < X^{1/\tau^{\ast}(\xi,\epsilon)}=H(P)$, contradiction. 
The proof of the case $H(P)\leq H(Q)$ is finished.

Now assume $H(P)>H(Q)$ for infinitely many pairs $(P,Q)$ as above. Note 
that \eqref{eq:infer} does not necessarily hold now as we needed $m\geq n$ for its deduction.
In this case we show that \eqref{eq:negate} is false, that is
\begin{equation} \label{eq:froh}
w_{m}(\xi)\leq \min\{ m+n-1,w_{n}^{\ast}(\xi)\}.
\end{equation}
Provided \eqref{eq:froh} holds one readily checks the logical implication of the theorem. 
Observe \eqref{eq:tobifroschxxx} implies
\begin{equation} \label{eq:sonne}
\max\{ \vert P(\xi)\vert, \vert Q(\xi)\vert \} \leq H(P)^{-w_{m}(\xi)+\hat{\epsilon}}
\end{equation}
for $\hat{\epsilon}=\tilde{\epsilon}\cdot w_{m}(\zeta)/\widehat{w}_{n}^{\ast}(\zeta)$ 
which again tends to $0$ as $\epsilon$ does. On the other
hand Lemma~\ref{davschm} yields
\begin{equation} \label{eq:mond}
\max\{ \vert P(\xi)\vert, \vert Q(\xi)\vert \} \gg_{m,n,\xi} H(P)^{-n}H(Q)^{-m+1}\geq H(P)^{-m-n+1}.
\end{equation}
Combination of \eqref{eq:sonne} and \eqref{eq:mond} yields $w_{m}(\xi)\leq m+n-1$.
It remains to be be shown that $w_{m}(\xi)\leq w_{n}^{\ast}(\xi)$. Assume this is false
and we have $w_{m}(\xi)-w_{n}^{\ast}(\xi)=\rho>0$. Then \eqref{eq:neunund} would imply,
if $\epsilon$ was chosen small enough, that
\begin{align*}
\vert \xi-\beta\vert &\leq H(\beta)^{-1} \, X^{-\widehat{w}_{n}^{\ast} (\xi) + \epsilon}
=H(Q)^{-1}H(P)^{-w_{m}(\xi)+\epsilon/\tau(\xi,\epsilon)}  \\
&< H(Q)^{-w_{m}(\xi)-1+\epsilon/\tau(\xi,\epsilon)}\leq H(Q)^{-w_{n}^{\ast}(\xi)-1-\rho/2},
\end{align*}
contradiction to the definition of $w_{n}^{\ast}(\xi)$ as $H(Q)$ tends to infinity. 
Hence \eqref{eq:froh} is established in this case and the proof is finished.
\end{proof}

\begin{proof}[Proof of Theorem~\ref{hatandstar}]

Let $n \ge 2$ be an integer and $\xi$ be a real transcendental number. 

We establish the first assertion. 
We follow the proof of Wirsing's theorem as given in \cite{bugbuch} and keep 
the notation used therein.
By the definition of $\widehat{w}_{n}$, observe that the inequality
$|Q_k (\xi)| \ll H (P_k)^{-n}$ in \cite[(3.16)]{bugbuch}
can be replaced by 
$$
|Q_k (\xi)| \ll H (P_k)^{-\widehat{w}_{n} (\xi) + \varepsilon}
$$
The lower bound for ${w}_{n}^{\ast} (\xi)$ on line $-8$ of \cite[p. 57]{bugbuch} then becomes
\begin{equation}\label{eq:longue}
{w}_{n}^{\ast} (\xi) \ge \min \Bigl\{ \widehat{w}_{n} (\xi),  
w_n (\xi) - \frac{n-1}{2}  + \frac{\widehat{w}_{n} (\xi) - n}{2},
\frac{w_n(\xi) + 1}{2} + \widehat{w}_{n} (\xi) - n \Bigr\}. 
\end{equation}
Since $w_n (\xi) \ge \widehat{w}_{n} (\xi)$, this gives
$$
{w}_{n}^{\ast} (\xi)  \ge \min \Bigl\{ \widehat{w}_{n} (\xi), 
\frac{3 \widehat{w}_{n} (\xi)}{2} - n + \frac{1}{2} \Bigr\} 
= \frac{3 \widehat{w}_{n} (\xi)}{2} - n + \frac{1}{2}, 
$$
by \eqref{eq:glmschr}. Thus, we have established
$$
\widehat{w}_{n} (\xi) \le \frac{2 (w_n^{\ast} (\xi) + n) - 1}{3}, 
$$
as asserted.

Now, we prove \eqref{eq:fussball}.
Inequality \eqref{eq:wirrwarr} can be rewritten as
$$
\widehat{w}_{n} (\xi) \ge \frac{(n-1) {w}_{n}^{\ast} (\xi)}{{w}_{n}^{\ast} (\xi)-1}.
$$
Assuming ${w}_{n} (\xi) \le 2n-1$, the smallest of the three terms in the curly brackets in 
\eqref{eq:longue} is the third one and we eventually get
$$
w_n (\xi) \le \frac{2 w_n^{\ast} (\xi)^2 - 2 n - w_n^{\ast} (\xi) + 1}{w_n^{\ast} (\xi)  - 1}.
$$
Combined with the lower bound
$$
\widehat{w}_{n}^{\ast}(\xi)\geq \frac{w_{n}(\xi)}{w_{n}(\xi)-n+1},
$$
established in \cite{buglaur}, we obtain \eqref{eq:fussball}. 
\end{proof}

\subsection*{Acknowledgements}
The authors are very grateful to the referee for a careful reading and several useful 
suggestions.
Johannes Schleischitz is supported by FWF grant P24828.


\begin{thebibliography}{HD}






\normalsize
\baselineskip=17pt


\bibitem{adbu} B. Adamczewski and Y. Bugeaud, 
\emph{Mesures de transcendance et aspects quantitatifs  de la methode
de Thue-Siegel-Roth}, Proc. London Math. Soc. 101 (2010), 1--31.


\bibitem{arbroy} B. Arbour and D. Roy, 
\emph{A Gel'fond type criterion in degree two}, Acta Arith. 111 (2004), 97--103.


\bibitem{bertish} V. I. Bernik and K. Tsishchanka, 
\emph{Integral polynomials with an overfall of the coefficient values and Wirsing's problem}, 
Dokl. Akad. Nauk Belarusi 37 (1993), no 5, 9--11 (in Russian). 

\bibitem{bugbuch} Y. Bugeaud, 
\emph{Approximation by algebraic numbers}, Cambridge Tracts in Mathematics
vol. 160, Cambridge University Press, Cambridge, 2004.

\bibitem{bdraft} Y. Bugeaud, \emph{Exponents of Diophantine approximation},  
In: Dynamics and Analytic Number Theory,
Proceedings of the Durham Easter School 2014.
Edited by D. Badziahin, A. Gorodnik, N. Peyerimhoff. 
Cambridge University Press. To appear.           


\bibitem{buglaur} Y. Bugeaud and M. Laurent, 
\emph{Exponents of Diophantine approximation and Sturmian continued fractions},
Ann. Inst. Fourier (Grenoble) 55 (2005), 773--804. 



\bibitem{davsh67} H. Davenport and W. M. Schmidt, 
\emph{Approximation to real numbers by quadratic irrationals},
Acta Arith. 13 (1967), 169--176.

\bibitem{davsh} H. Davenport and W. M. Schmidt, 
\emph{Approximation to real numbers by algebraic integers}, 
Acta Arith. 15 (1969), 393--416.



\bibitem{jarnik} V. Jarn\'ik, \emph{Zum Khintchinschen ''\"Ubertragungssatz''},
Trav. Inst. Math. Tbilissi 3 (1938), 193--212.

\bibitem{Jar54}
V. Jarn\'ik, \emph{Contribution \`a la th\'eorie des approximations diophantiennes
lin\'eaires et homog\`enes,}
Czechoslovak Math. J. 4 (1954), 330--353 (in Russian, French summary).


\bibitem{Kh26}
A. Y. Khintchine,
\emph{\"Uber eine Klasse linearer diophantischer Approximationen},
Rendiconti Circ. Mat. Palermo 50 (1926), 170--195.


\bibitem{leveque} 
W. J. Leveque, \emph{On Mahler's U-numbers}, J. London Math. Soc. 28 (1953), 220--229.

\bibitem{Mosh14}
N. Moshchevitin, \emph{A note on two linear forms},
Acta Arith. 162 (2014), 43--50. 



\bibitem{royfr} D. Roy, \emph{Approximation simultan\'ee d'un nombre et de son carr\'e},
C.R. Acad. Sci. Paris, Ser. I 336 (2003), 1--6.

\bibitem{daroy} D. Roy, \emph{Approximation to real numbers by cubic algebraic integers I},
Proc. London Math. Soc. 88 (2004), 42--62.



\bibitem{Royzelo} D. Roy and D. Zelo,
\emph{Measures of algebraic approximation to Markoff extremal numbers}, 
J. Lond. Math. Soc. 83 (2011), 407--430.


\bibitem{j3} J. Schleischitz, \emph{On the spectrum of Diophantine approximation constants},
Mathematika 62 (2016), 79--100.


\bibitem{tnumbers} W.M. Schmidt, \emph{T-numbers do exist}, 
Symposia Math. IV, Inst. Naz. di Alta Math. Academic Press, Rome (1968), 3--26. 


\bibitem{ssch} W.M. Schmidt, L. Summerer,
\emph{Diophantine approximation and parametric geometry of numbers}, 
Monatsh. Math. 169 (2013), 51--104. 

\bibitem{sums}  W.M. Schmidt, L. Summerer,
\emph{Simultaneous approximation to three numbers},
Mosc. J. Comb. Number Theory 3 (2013), 84--107.

\bibitem{Tsi07} K.I. Tsishchanka,
\emph{On approximation of real numbers by algebraic numbers of bounded degree},
J. Number Theory 123 (2007), 290--314.

\bibitem{wald} M. Waldschmidt, 
\emph{Recent advances in Diophantine approximation}, 
Number Theory, Analysis and Geometry, 659--704, Springer, New York, 2012.

\bibitem{wirsing} E. Wirsing, \emph{Approximation mit algebraischen Zahlen beschr\"ankten Grades},
J. Reine Angew. Math. 206 (1961), 67--77.


\end{thebibliography}
\end{document}